 \documentclass[english]{polyjmath}

\title{On the convexity of the fundamental domain of binary cyclotomic forms}

\author{\'Etienne Fouvry}
\affil{Univ. Paris--Saclay, CNRS \\
Laboratoire de Math\' ematiques d'Orsay \\
91405 Orsay, France \\
E-mail: Etienne.Fouvry@universite-paris-saclay.fr}

\author{Michel Waldschmidt}
\affil{Sorbonne Universit\' e \\
CNRS, IMJ--PRG \\
75005 Paris, France\\
E-mail: michel.waldschmidt@imj-prg.fr}

\usepackage{amsmath,amsthm}

\usepackage{amssymb}
\usepackage{graphicx}
 \usepackage[applemac]{inputenc}
\usepackage{hyperref} 
\numberwithin{equation}{section}
\def\boxit#1#2{\setbox1=\hbox{\kern#1{#2}\kern#1}%
\dimen1=\ht1 \advance\dimen1 by #1 \dimen2=\dp1 \advance\dimen2 by #1
\setbox1=\hbox{\vrule height\dimen1 depth\dimen2\box1\vrule}%
\setbox1=\vbox{\hrule\box1\hrule}%
\advance\dimen1 by .4pt \ht1=\dimen1
\advance\dimen2 by .4pt \dp1=\dimen2 \box1\relax}
\def\hfl#1#2{\smash{\mathop{\hbox to 12 mm{\rightarrowfill}}
\limits^{\scriptstyle#1}_{\scriptstyle#2}}}

 \def\hlfl#1#2{\smash{\mathop{\hbox to 12 mm{\leftarrowfill}}
\limits^{\scriptstyle#1}_{\scriptstyle#2}}}
\def\phfl#1#2{\smash{\mathop{\hbox to 8 mm{\rightarrowfill}}
\limits^{\scriptstyle#1}_{\scriptstyle#2}}}

 \def\phlfl#1#2{\smash{\mathop{\hbox to 8 mm{\leftarrowfill}}
\limits^{\scriptstyle#1}_{\scriptstyle#2}}}

\def\cqfd{\unskip\kern 6pt\penalty 500
\raise -2pt\hbox{\vrule\vbox to 10pt{\hrule width 4pt
\vfill\hrule}\vrule}\par}
\def\house#1{\setbox1=\hbox{$\,#1\,$}%
\dimen1=\ht1 \advance\dimen1 by 2pt \dimen2=\dp1 \advance\dimen2 by 2pt
\setbox1=\hbox{\vrule height\dimen1 depth\dimen2\box1\vrule}%
\setbox1=\vbox{\hrule\box1}%
\advance\dimen1 by .4pt \ht1=\dimen1
\advance\dimen2 by .4pt \dp1=\dimen2 \box1\relax}
\def\virgule{\raise 2pt \hbox{$,$}}

\def\R{\mathbb{R}}

\def\Z{\mathbb{Z}}

\def\rmd{{\mathrm d}}

\def\calO{{\mathcal{O}}}

  \newcommand{\bfm}{\mathbf {m}}

\def\og{\leavevmode\raise.3ex\hbox{$\scriptscriptstyle
\langle\!\langle$}}

\def\fg{\leavevmode\raise.3ex\hbox{$\scriptscriptstyle
\,\rangle\!\rangle$}}

\begin{document}
 
   \maketitle
  
   \hfill  %{\em \small update: \misajour}
Nous d\'edions cet article à Ren\'e Schoof, 
\\
\null   \hfill  
en témoignage de notre admiration. 

\begin{abstract}
Denote by $\Phi_m(X,Y)\in\Z[X,Y]$, $m\geqslant 1$ the sequence of binary cyclotomic forms, so that $\Phi_m(T,1)\in\Z[T]$, $m\geqslant 1$ is the sequence of cyclotomic polynomials. We prove that, for $m\geqslant 3$, the fundamental domain 
 \[
\calO_m:=
\{ (x,y)\in\R^2\; \mid \; \Phi_m(x,y)\leqslant 1\}
\]
of $\Phi_m$ is a convex subset of $\R^2$ if and only if $m$ is equal to $p$, $2p$ or $2^k$ with $p$ an odd prime and $k\geqslant 2$. Finally we investigate the convexity of the fundamental domains attached to some products of binary forms $\Phi_m$.

\keywords{Cyclotomic forms, convexity}
\end{abstract}

 \section{The result for one form}
  When $F(X,Y)\in\R[X,Y]$ is a positive definite binary form of even degree $\geqslant 2$, we denote by 
\[
\mathcal O(F):=
\{ (x,y)\in\R^2\; \mid \; F(x,y)\leqslant 1\}
\]
its fundamental domain. It is a compact subset of the Euclidean plane, stable by any homothety with ratio less or equal to $1$, in absolute value. In particular, it is symmetrical relatively to the origin
of the plane.

 The sequence of binary cyclotomic forms $\Phi_m(X,Y)\in\Z[X,Y]$, $m\geqslant 1$, is defined by induction using the formula
 \[
 X^m-Y^m=\prod_{d\mid m} \Phi_d(X,Y).
 \]
 The sequence of cyclotomic polynomials is denoted by $\phi_m(T)=\Phi_m(T,1)\in\Z[T]$, $m\geqslant 1$.
 The degree of $\Phi_m$ is $\varphi(m)$, where $\varphi$ is Euler totient function. For $m\geqslant 3$, the binary form $\Phi_m(X,Y)$ is positive definite and we denote by $\calO_m$ its fundamental domain $\mathcal O(\Phi_m)$.
 Our first goal is to prove 
 
  \begin{theorem}\label{Theorem}
For $m=p$ or $2p$ with $p$ an odd prime, and for $m=2^k$ with $k\geqslant 2$, the domain $\calO_m$ is convex.\\
 For all other values of $m\geqslant 3$, the domain $\calO_m$ is not convex. 
\end{theorem}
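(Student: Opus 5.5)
The plan is to reduce convexity of $\calO_m$ to a one-variable inequality, handle the convex cases directly, and exhibit a point of strictly negative curvature in the remaining cases.

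\textbf{Reduction to one variable.} Let $d=\varphi(m)$ and $N=\Phi_m^{1/d}$. This function is continuous, positive away from the origin, positively homogeneous of degree $1$, and even in $(x,y)$. The domain $\calO_m=\{N\leqslant 1\}$ is convex if and only if $N$ is convex. A continuous function is convex as soon as its restriction to every line is convex near each point. On lines through the origin this holds by homogeneity, since $N$ restricted to such a line is a multiple of $|t|$. Away from the origin, on the half plane $y>0$ we have $N(x,y)=y\,h(x/y)$ with $h(t)=\phi_m(t)^{1/d}$. A perspective function of this kind is convex if and only if $h$ is convex. The symmetric chart $x\ne 0$ gives the same function, because $\Phi_m$ is palindromic for $m\geqslant 2$. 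Hence $\calO_m$ is convex if and only if, for every real $t$,
\[
d\,\phi_m(t)\phi_m''(t)-(d-1)\phi_m'(t)^2\geqslant 0 .
\]
Two further reductions will be used throughout:
\begin{itemize}
\item for odd $n$, $\Phi_{2n}(X,Y)=\Phi_n(-X,Y)$, a linear change of variables which preserves convexity;
\item if $n$ is the radical of $m$ and $e=m/n$, then $\Phi_m(X,Y)=\Phi_n(X^e,Y^e)$.
\end{itemize}

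\textbf{Convex cases.} For $m=2^k$ with $k\geqslant 2$ we have $\Phi_m=X^{2^{k-1}}+Y^{2^{k-1}}$, whose fundamental domain is an $\ell^q$ unit ball with $q\geqslant 2$, hence convex. The case $m=2p$ reduces to $m=p$ by the reflection above. For $m=p$ we must verify the displayed inequality for $f(t)=(t^p-1)/(t-1)$ on all of $\R$, with $d=p-1$. I would clear denominators, since $f=(t^p-1)/(t-1)$, and reduce the quantity to $(t-1)^{-4}$ times an explicit polynomial $P(t)$. I would then prove $P\geqslant 0$ by grouping terms. The region $t<0$ is where sign issues arise, so I would treat $t\geqslant 0$ and $t<0$ separately and use the palindromic symmetry $t\mapsto 1/t$ to restrict to $|t|\leqslant 1$.

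\textbf{Non-convex cases.} Write $m=e\,n$ as above and first suppose $e\geqslant 2$.
\begin{itemize}
\item If $n$ is odd, then $\phi_m(t)=\phi_n(t^e)=1-\mu(n)t^e+O(t^{2e})$. After replacing $t$ by $-t$ when $e$ is odd, $h(t)=1-c|t|^e/d+\cdots$ with $c>0$, which is concave at $0$.
\item If $n$ is even, we use $\Phi_m(x,y)=\Phi_{n/2}(-x^e,y^e)$, where $e$ is then even. Since $\phi_{n/2}(-s)=1+\mu(n/2)s+\cdots$, a suitable sign again gives concavity at $0$.
\item Sign bookkeeping shows that the only survivors are $m=2^k$ and $m=p$, $2p$, where the relevant term is $+t^e$ (or $n=2$).
\end{itemize}
For squarefree $n$, which by the reflection we may take odd with at least two prime factors, we work modulo $t^3$. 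Using $\phi_n=\prod_{\delta\mid n}(1-t^{n/\delta})^{\mu(\delta)}$, only $\delta=n$ contributes, so $\phi_n\equiv(1-t)^{\mu(n)}$.
\begin{itemize}
\item When $\mu(n)=1$, $\phi_n\equiv 1-t$, and the test at $0$ reads $0\geqslant d-1$, which fails. This covers $pq$ and more generally an even number of primes.
\item When $\mu(n)=-1$ (three or more primes), the test at $0$ is inconclusive. I would then test at $t=1$, or at $t=-1$ for the reflected form, where $\phi_n(1)=1$ and $\phi_n'(1)=d/2$ by palindromy. The needed value $\phi_n''(1)$ comes from $\sum_\zeta (1-\zeta)^{-1}=d/2$ and $\sum_\zeta(1-\zeta)^{-2}$ over primitive $n$-th roots $\zeta$; the latter sum is expressible through Jordan-type totients.
\end{itemize}

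\textbf{Main obstacle.} I expect this last case, $\mu(n)=-1$, to be the hardest step, since one must show that the inequality fails at $t=\pm1$ or at some other well-chosen point. The polynomial inequality for $m=p$ is the other delicate part.
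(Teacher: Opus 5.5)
Your reduction to the one-variable inequality $d\,\phi_m\phi_m''-(d-1)(\phi_m')^2\geqslant 0$ on $\R$ is correct. Your test at $t=0$ does settle $m=p^\ell,2p^\ell$ ($\ell\geqslant 2$), $m=2^ap^\ell$ ($a\geqslant 2$), odd non-squarefree $m$ and their doubles, and odd squarefree $m$ (or $m/2$) with $\mu=1$. But there is a genuine gap in the non-convexity part: two infinite families are reached neither by the test at $0$ nor by the test at $\pm1$.

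First, your sign-bookkeeping claim is false. Suppose $4\mid m$ and $n/2$ is a product of an even number $\geqslant 2$ of odd primes. Then $e$ is even and $\phi_m(t)=\phi_{n/2}(-t^e)=1+t^e+O(t^{2e})$, so the test at $0$ gives local convexity. For example, $\phi_{60}(t)=1+t^2-t^6-t^8-t^{10}+t^{14}+t^{16}$. Incidentally, $e$ need not be even when $n$ is even (take $m=18$), but that case is harmless because then $e$ is odd.

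Second, take odd squarefree $m$ with $\mu(m)=-1$, for instance $105$. The value at $0$ is $d+1>0$, and the test you propose at $t=\pm1$ cannot succeed for any $m\geqslant 3$. Write $\zeta=e^{i\theta_\zeta}$ for the primitive $m$-th roots of unity. From $(\log\phi_m)'=\sum_\zeta (t-\zeta)^{-1}$ and $(1-e^{i\theta})^{-1}=\frac12+\frac i2\cot\frac\theta2$ one gets, at $t=1$,
\[
d\,\phi_m\phi_m''-(d-1)(\phi_m')^2=\phi_m(1)^2\,\frac d4\sum_{\zeta}\cot^2\frac{\theta_\zeta}{2}>0 .
\]
The same holds at $t=-1$ after replacing $\zeta$ by $-\zeta$. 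So your Jordan-totient computation would only confirm positivity, and you have no other point to offer.

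The missing idea is global rather than local, and it fits your framework in one line. By Lemma~\ref{Lemma:BSMF}, $\phi_m(-1)=\phi_m(0)=\phi_m(1)=1$ whenever $m$ has two distinct odd prime factors, or $m=2^ap^\ell$ with $a\geqslant 2$. Then $h=\phi_m^{1/d}$ equals $1$ at $-1,0,1$. If $h$ were convex, it would be $\leqslant 1$ on $[-1,1]$, while also $h(-s)+h(s)\geqslant 2h(0)=2$. This forces $h\equiv 1$ on $[-1,1]$, which is absurd since $\phi_m$ is not constant. This is exactly the paper's observation that the line $y=1$ meets the boundary of $\calO_m$ in three points. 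It disposes of both families above at once, and the paper then needs the local argument at $(0,1)$ only for $m=p^\ell$ with $\ell\geqslant 2$ (and $2p^\ell$ by reflection).

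Your convexity argument for $m=p$ is a legitimate alternative to the paper's, but you left it unproved. The paper writes $F=m\int_0^1 L(s)^{m-1}\,\rmd s$, so that the Hessian quadratic form is $m(m-1)(m-2)\int_0^1(st+1-s)^2L(s)^{m-3}\,\rmd s\geqslant 0$ because $m-3$ is even. Your route does go through. With $d=p-1$, your polynomial is $P(t)=p\,[(1-t^d)^2-d^2t^{d-1}(1-t)^2]$. It is positive for $t<0$ because $d-1$ is odd, and nonnegative for $t>0$ by AM--GM, $1+t+\cdots+t^{d-1}\geqslant d\,t^{(d-1)/2}$. So the delicate side is $t>0$, not $t<0$.
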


It follows that for a density $1$ set of $m\geqslant 3$, the domain $\calO_m$ is not convex. As $N\to\infty$, we have 
\[
\# \left\{3\leqslant m\leqslant N\; \mid \text { $\calO_m$ is convex}\right\}=\left(\frac 3 2 +o(1)\right) \frac N {\log N},
\]
which is a direct consequence of the Prime Number Theorem. 

 \section{Auxiliary results}
 
  The following values of the cyclotomic polynomials at $0$, $1$ and $-1$ are easy to check \cite[\S~1, (6) and (7)]{N},  \cite[\S~4]{FW2020}, \cite[Lemmas 2.2 and 2.3]{HM2021}. 
  
    \begin{lemma}\label{Lemma:BSMF}
We denote by $\omega(m)$ the number of distinct prime divisors of a positive integer $m$.
 We have $\phi_1(0)=-1$, $\phi_m(0)=1$ for $m\geqslant 2$, 
\[
\phi_m (1)=
\begin{cases}
0&\text{ if } m=1,\\
p& \text{ if } m=p^k\ (k\geq 1,  \text{ $p$ prime})\\
1& \text { if } \omega(m)\geqslant 2,
\end{cases}
\]
and
\[
\phi_m (-1)=
\begin{cases}
-2&\text{ if } m=1,\\ 
p& \text { if } m=2p^k\ (k\geq 1,  \text{ $p$ prime})\\
1&\text{ otherwise}.
\end{cases}
\]
 \end{lemma}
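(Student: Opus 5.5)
The statement to prove is Lemma~\ref{Lemma:BSMF}, which only concerns values of one-variable cyclotomic polynomials. The plan is to derive everything from $T^m-1=\prod_{d\mid m}\phi_d(T)$ by induction on $m$, together with Möbius inversion in the multiplicative form $\phi_m(T)=\prod_{d\mid m}(T^d-1)^{\mu(m/d)}$.

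\emph{Value at $0$.} Clearly $\phi_1(T)=T-1$, so $\phi_1(0)=-1$. Evaluating $T^m-1$ at $T=0$ gives
\[
-1=\prod_{d\mid m}\phi_d(0)=-\prod_{d\mid m,\ d>1}\phi_d(0).
\]
Induction on $m$ then gives $\phi_m(0)=1$ for all $m\ge2$.

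\emph{Value at $1$.} We have $\phi_1(1)=0$. For $m>1$, divide by $T-1$ to get
\[
\frac{T^m-1}{T-1}=1+T+\dots+T^{m-1}=\prod_{d\mid m,\ d>1}\phi_d(T),
\]
and evaluating at $T=1$ yields $m=\prod_{d\mid m,\ d>1}\phi_d(1)$. Taking logarithms, $\log m=\sum_{d\mid m,\,d>1}\log\phi_d(1)$, with all values positive by induction. Since $\log m=\sum_{d\mid m}\Lambda(d)$, where $\Lambda$ is the von Mangoldt function, uniqueness in Möbius inversion (induction on $m$) gives $\log\phi_m(1)=\Lambda(m)$. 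Hence $\phi_m(1)=p$ when $m=p^k$, and $\phi_m(1)=1$ when $\omega(m)\ge2$.

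\emph{Value at $-1$.} Here $\phi_1(-1)=-2$ and $\phi_2(-1)=0$, since $\phi_2(T)=T+1$. For $m\ge3$, use the relations $\phi_{2n}(T)=\phi_n(-T)$ for odd $n\geq 3$, and $\phi_{2n}(T)=\phi_n(T^2)$ when $n$ is even. Both follow from comparing roots: in the first case, $-\zeta$ runs over the primitive $2n$-th roots when $\zeta$ runs over the primitive $n$-th roots; in the second, the squares of the primitive $2n$-th roots cover each primitive $n$-th root exactly twice, giving $\phi_{2n}(T)\phi_{2n}(-T)$, which equals $\phi_{2n}(T)^2$ by evenness (or use degree counting). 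The cases then go as follows:
\begin{itemize}
\item If $m\ge3$ is odd, $\phi_m$ is reciprocal of even degree, so $\phi_m(-1)=\phi_{2m}(1)$. Since $\omega(2m)\ge2$, this equals $1$.
\item If $m=2n$ with $n\geq 3$ odd, then $\phi_m(-1)=\phi_n(1)$. This equals $p$ if $n=p^k$ and $1$ otherwise.
\item If $4\mid m$, then $\phi_m(-1)=\phi_{m/2}(1)$. This equals $2$ if $m=2^k$, which is the case $m=2p^k$ with $p=2$, and $1$ otherwise.
\end{itemize}
The only delicate point is bookkeeping: checking that the case $m=2\cdot 2^{k}$ matches the displayed formula with $p=2$, and that $m=2$ gives $0$ rather than $2$. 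The statement's ``$m=2p^k$, $k\ge1$'' excludes $m=2$, so the formula should be read with $m=2$ handled separately, giving $\phi_2(-1)=0$.
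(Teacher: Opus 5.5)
Your argument is correct. The paper does not actually prove this lemma: it only says the values are easy to check and refers to the literature. Your derivation is the standard self-contained one behind those references. You evaluate $T^m-1=\prod_{d\mid m}\phi_d(T)$ at $0$. You evaluate $(T^m-1)/(T-1)$ at $1$ and invert; your identity $\log\phi_m(1)=\Lambda(m)$ is exactly what the induction produces. You then reduce the value at $-1$ to values at $1$ via $\phi_{2n}(T)=\phi_n(-T)$ for odd $n\geqslant 3$ and $\phi_{2n}(T)=\phi_n(T^2)$ for even $n$.

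Two small corrections to the justifications. In your first bullet, the equality $\phi_m(-1)=\phi_{2m}(1)$ for odd $m\geqslant 3$ comes from the relation $\phi_{2m}(T)=\phi_m(-T)$ you had just established. It does not come from reciprocity, which yields nothing at $t=-1$. Also, the relation $\phi_{2n}(T)=\phi_n(T^2)$ for even $n$ follows at once from \eqref{Equation:Radical}, since $2n$ and $n$ have the same radical when $n$ is even. Your root-counting argument for it is valid, but its phrasing is garbled: the product that equals $\phi_{2n}(T)\phi_{2n}(-T)$ is $\phi_n(T^2)^2$.

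Your remark about $m=2$ is right and worth keeping. As printed, $m=2$ is not of the form $2p^k$ with $k\geqslant 1$, so it falls into the ``otherwise'' case, and the lemma asserts $\phi_2(-1)=1$. In fact $\phi_2(-1)=0$. The statement needs a separate line for $m=2$. This does not affect the paper, which applies the lemma only with $m\geqslant 3$.
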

 
   Recall the general formula
\begin{equation}\label{Equation:Radical}
\phi_m (t) =\phi_q (t^{m/q}),
\end{equation}
when $q$ is the radical of $m$ (i.e. the product of all primes dividing $m$). 

 We will also use the following consequence of the Brunn--Hadamard criterion 
 \cite[Th. 2.3]{AP}, 
\cite[\S~2.3, Th. 2.10 p.~32]{G}. It links the convexity of the function $F$ to its Hessian.

 \begin{lemma}\label{Lemma:BrunnHadamard}
Let $F: \mathbb R^2 \rightarrow \mathbb R$ be a $\mathcal C^2$--function. Then $F$ is convex if and only if the Hessian matrix of $F$ is positive semi--definite. This condition is equivalent to the three inequalities
\[
\begin{cases}
F_{xx}''(x,y)\geqslant 0\\
F_{yy}''(x,y)\geqslant 0\\
F''_{xx}(x,y)F''_{yy}(x,y)-F_{xy}''(x,y)^2\geqslant 0,
\end{cases}
\]
for all $(x,y)\in \mathbb R^2$.
\end{lemma}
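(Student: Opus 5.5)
The plan is to reduce the two-variable statement to a one-variable one by restricting $F$ to lines, and then to handle the purely algebraic equivalence with the three inequalities separately. Write $H(x,y)$ for the Hessian matrix of $F$ at $(x,y)$. A function on $\R^2$ is convex if and only if its restriction to every line is convex. So fix a point $P\in\R^2$ and a direction $v=(v_1,v_2)\in\R^2$, and set $g(t)=F(P+tv)$. Since $F$ is $\mathcal C^2$, the chain rule gives
\[
g''(t)=v_1^2F''_{xx}(P+tv)+2v_1v_2F''_{xy}(P+tv)+v_2^2F''_{yy}(P+tv)={}^t v\,H(P+tv)\,v .
\]
Therefore $F$ is convex if and only if every such $g$ is convex, and $H$ is positive semi-definite everywhere if and only if $g''\geqslant 0$ for all choices of $P$ and $v$. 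It remains to prove the one-variable fact: a $\mathcal C^2$ function $g:\R\to\R$ is convex if and only if $g''\geqslant 0$.

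I expect this one-variable equivalence to be the only real analytic step. For the direction ``convex $\Rightarrow g''\geqslant 0$'', convexity gives $g(t+h)+g(t-h)-2g(t)\geqslant 0$ for every $h$. Dividing by $h^2$ and letting $h\to 0$ (Taylor's formula to second order) gives $g''(t)\geqslant 0$. For the converse, $g''\geqslant 0$ means that $g'$ is nondecreasing. Take $a<c<b$ and apply the mean value theorem on $[a,c]$ and on $[c,b]$. This shows that the slope of $g$ on $[a,c]$ is at most the slope on $[c,b]$, which is exactly the chord inequality characterizing convexity. Alternatively, one can use Taylor's formula with integral remainder to obtain $g(s)\geqslant g(t)+g'(t)(s-t)$, which also gives convexity.

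Finally, for the reformulation, I would show that a symmetric matrix $\begin{pmatrix} a&b\\ b&c\end{pmatrix}$ is positive semi-definite if and only if $a\geqslant 0$, $c\geqslant 0$ and $ac-b^2\geqslant 0$.

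\emph{Necessity.} Testing the quadratic form on $v=(1,0)$ gives $a\geqslant 0$, and on $v=(0,1)$ gives $c\geqslant 0$. If $a>0$, testing on $v=(-b,a)$ gives $a(ac-b^2)\geqslant 0$, hence $ac-b^2\geqslant 0$; the case $c>0$ is symmetric. If $a=c=0$, nonnegativity of $2bv_1v_2$ for all $v$ forces $b=0$.

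\emph{Sufficiency.} If $a>0$, complete the square:
\[
av_1^2+2bv_1v_2+cv_2^2=a\Bigl(v_1+\frac{b}{a}v_2\Bigr)^2+\frac{ac-b^2}{a}\,v_2^2\geqslant 0 .
\]
If $a=0$, then $ac-b^2\geqslant 0$ forces $b=0$, and the form reduces to $cv_2^2\geqslant 0$.

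Applying this pointwise to $H(x,y)$ yields the three stated inequalities, which completes the proof.
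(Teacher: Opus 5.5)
Your proof is correct and complete. The paper itself gives no argument for this lemma: it quotes it as a consequence of the Brunn--Hadamard criterion, citing \cite[Th.~2.3]{AP} and \cite[Th.~2.10]{G}.

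Your self-contained route has three parts:
\begin{itemize}
\item restriction to lines, via $g''(t)={}^t v\,H(P+tv)\,v$;
\item the one-variable criterion, proved with the symmetric second difference in one direction and the mean value theorem (chord-slope inequality) in the other;
\item the $2\times 2$ principal-minor test for positive semi-definiteness.
\end{itemize}
This is the standard textbook proof of the cited result, so supplying it costs only space.

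What your version makes explicit, and the citation leaves implicit, is why all three inequalities are listed. In the degenerate case $F''_{xx}=0$, the determinant condition forces $F''_{xy}=0$ but says nothing about the sign of $F''_{yy}$. Your sufficiency argument correctly invokes $c\geqslant 0$ separately when $a=0$. Your necessity argument likewise correctly treats $a=c=0$ on its own, instead of dividing by $a$.
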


Let $F$ be a positive definite polynomial in $\R[X,Y]$. If $F$ is convex, then $\mathcal O (F)$ is convex. The converse is true when $F$ is homogeneous
 \cite[Theorem 5.1]{AP}. We deduce a criterion for $\mathcal O (F)$ to be convex in the particular case where $F$ is a form.

 \begin{lemma}\label{applicationofBrunnHadamard}
Let $n$ be a positive even integer and 
 $f\in\mathbb R[t]$ a monic polynomial of degree $n$ without real root. Denote by $F(X,Y)=Y^nf(X/Y)$ the associated positive definite binary form. Then 
 $\mathcal O(F)$ is convex if and only if, for all $(x,y)\in \mathcal O(F)$, we have 
\begin{equation} \label{Equation:BrunnHadamard1}
\begin{cases}
F_{xx}''(x,y)\geqslant 0\\
F_{yy}''(x,y)\geqslant 0\\
F''_{xx}(x,y)F''_{yy}(x,y)\geqslant F_{xy}''(x,y)^2.
\end{cases}
\end{equation}
\end{lemma}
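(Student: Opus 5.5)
Since $f$ is monic of even degree $n$ with no real root, $F$ is positive definite and homogeneous of degree $n$. By the result recalled just before the statement (\cite[Theorem 5.1]{AP}), $\mathcal O(F)$ is convex if and only if the function $F$ itself is convex on $\R^2$. Lemma~\ref{Lemma:BrunnHadamard} turns convexity of $F$ into the three Hessian inequalities for all $(x,y)\in\R^2$. So the task is to show that these inequalities hold on all of $\R^2$ as soon as they hold on $\mathcal O(F)$. The converse direction is immediate: if $\mathcal O(F)$ is convex, the inequalities hold everywhere, in particular on $\mathcal O(F)$.

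The reduction from $\R^2$ to $\mathcal O(F)$ rests on homogeneity. Each second partial derivative $F''_{xx},F''_{yy},F''_{xy}$ is a form of degree $n-2$ (identically zero if $n=2$). Hence for $\lambda>0$,
\[
F''_{xx}(\lambda x,\lambda y)=\lambda^{n-2}F''_{xx}(x,y),
\]
and likewise for the others. The expression $F''_{xx}F''_{yy}-(F''_{xy})^2$ is then homogeneous of degree $2n-4$. The signs of all three quantities are therefore invariant under positive dilations.

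Now take any $(x,y)\neq(0,0)$. Since $F$ is positive definite, $F(x,y)>0$. Set $\lambda=F(x,y)^{-1/n}$; then $F(\lambda x,\lambda y)=1$, so $(\lambda x,\lambda y)\in\mathcal O(F)$. The inequalities \eqref{Equation:BrunnHadamard1} at $(\lambda x,\lambda y)$ transfer back to $(x,y)$ by the scaling above. The origin lies in $\mathcal O(F)$, so it is covered directly by the hypothesis (and in any case it is a limit point, so continuity would also suffice). Thus the Hessian is positive semi-definite everywhere and $F$ is convex, which gives convexity of $\mathcal O(F)$.

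There is no real obstacle here. The only points needing care are that the cited equivalence (convexity of a positive definite form $\Leftrightarrow$ convexity of its sublevel set) really applies to $F(X,Y)=Y^nf(X/Y)$, and that this $F$ is positive definite. The latter holds because $f$ has no real roots and is monic of even degree, so $f>0$ on $\R$, and $F(x,0)=x^n>0$ for $x\neq 0$.
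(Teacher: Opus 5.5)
Your proof is correct and takes essentially the paper's route. You combine the equivalence ``positive definite form convex $\Leftrightarrow$ its sublevel set convex'' (the cited Theorem~5.1 of Aguilera--Morin) with the Hessian criterion of Lemma~\ref{Lemma:BrunnHadamard}, and then use the $(n-2)$-homogeneity of the second derivatives, exactly as in Remark~\ref{Remark:BrunnHadamard}, to pass from $\mathcal O(F)$ to all of $\R^2$.
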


\begin{remark}\label{Remark:BrunnHadamard} 
If the condition \eqref{Equation:BrunnHadamard1} is satisfied for all $(x,y)$ with $F(x,y)=1$, then by homogeneity it is satisfied for all $(x,y)\in \mathbb R^2$.
\end{remark}
\begin{remark} \label{remark2.5}Since $F$ is a form with even degree, the fundamental domain $\mathcal O (F)$ is symmetrical with respect to the origin. If furthermore the polynomial $f$ is reciprocal (as the $\phi_n$), $\mathcal O(F)$ is symmetrical with respect to the line $\{(x,y)\mid y=x\}$. This observation leads one to consider the intersection
\[
\mathcal O (F) \cap \mathcal Q,
\]
where $\mathcal Q$ is the quarter of the plane defined by
\[
{\mathcal Q } = \{ (x,y) \in \mathbb R^2 \mid \, \vert y\vert \leq x \}.
\]
\end{remark}

We now assume that the hypotheses of Lemma \ref{applicationofBrunnHadamard} are satisfied and that the polynomial $f$ is reciprocal. 

For $-1<t<1$ we set $x(t)=f(t)^{-1/n}$, so that $F(x(t),tx(t))=1$ for $-1<t<1$. The function $t\mapsto (x(t),tx(t))$ parametrizes the intersection of $\mathcal Q$ with the boundary of the fundamental domain of $\mathcal O(F)$. 

From $F(X,Y)=X^nf(Y/X)=Y^nf(X/Y)$ we deduce 
\[
F'_x(x,y)=y^{n-1}f'(x/y), \quad F''_{xx}(x,y)=y^{n-2}f''(x/y)
\]
and
\[
F'_y(x,y)=x^{n-1}f'(y/x), \quad F''_{yy}(x,y)=x^{n-2}f''(y/x).
\]
When $g$ is a polynomial of degree $d$, let us denote by $\widetilde g$ the polynomial $t^dg(1/t)$. Since $f$ is reciprocal, we have $\widetilde f=f$. In general $f'$ is not reciprocal 
 and $\widetilde{f'}\not={\tilde {f}}{\ }'$.
However ${\widetilde{f'}}'=(\rmd / \rmd t){\widetilde{f'}}(t)$ is reciprocal with degree $n-2$; indeed from 
 \[
F'_x(x,y)=y^{n-1}f'(x/y) =x^{n-1}\widetilde{f'}(y/x)
\text{ 
and
}
F'_y(x,y)=x^{n-1}f'(y/x) = y^{n-1}\widetilde{f'}(x/y)
\]
we deduce
\[
x^{n-2} {\widetilde{f'}}'(y/x)=F''_{xy}(x,y)=F''_{yx}(x,y)=y^{n-2} {\widetilde{f'}}'(x/y).
\] 
With $x=x(t)$, $y=y(t)=tx(t)$, we have
\[
F'_x(x,y)=y^{n-1}f'(1/t)=x^{n-1}\widetilde{f'}(t), \quad F''_{xx}(x,y)=y^{n-2}f''(1/t)=x^{n-2}\widetilde{f''}(t),
\]
 \[
F'_y(x,y)=x^{n-1}f'(t), \quad F''_{yy}(x,y)=x^{n-2}f''(t)
\]
and
\[
F''_{xy}(x,y)=y^{n-2}{\widetilde{f'}}'(x/y)=x^{n-2}{\widetilde{f'}}'(y/x).
\]
Therefore, thanks to Remark \ref{Remark:BrunnHadamard} and to the symmetry, the convexity conditions \eqref{Equation:BrunnHadamard1} can be written in an equivalent way as
\begin{equation} \label{Equation:BrunnHadamard2}
\begin{cases} f''(t) \geq 0\\
 \widetilde{f''}(t)\geq 0\\
f''(t) \widetilde{f''}(t) \geqslant {\widetilde{f'}}'(t)^2
\end{cases}
\text{ for } -1<t<1.
\end{equation}

Under the assumptions of Lemma \ref{applicationofBrunnHadamard},
assuming that $f$ is reciprocal and $n\geqslant 4$, let $h$, if it exists, the order of the first derivative of $f$ of order $\geqslant 2$ and $\leq n/2$, which does not vanish at $0$.
Recall that $f$ is monic and reciprocal. \\
If $h$ exists, $f(t)$ has the shape
\begin{equation}\label{f=}
 f(t)=1+a_1t+a_ht^h+\cdots 
 +a_ht^{n-h}+a_1t^{n-1}+t^n
\end{equation}
 with $2\leqslant h\leqslant n/2$ and $a_h\not=0$. \\
 If $h$ does not exist, $f(t)$ has the shape
\begin{equation}\label{f=bis}
 f(t)= 1+a_1t+a_1t^{n-1}+t^n.
\end{equation}
 We say that $\mathcal O(F)$ {\it is convex at the point $(0,1)$} if there exists $r>0$ such that the intersection of $\mathcal O(F)$ with the disc of center $(0,1)$ and radius $r$ is convex. If there exists $\varepsilon >0$ such that the three inequalities of 
\eqref{Equation:BrunnHadamard2} hold for all $t$ in the smaller interval $-\varepsilon < t <\varepsilon$, then $\mathcal O(F)$ is convex at the point $(0,1)$. \\
In the opposite direction : if for every $\varepsilon >0$, there exists $-\varepsilon < t < \varepsilon$, such that at least one the following inequalities holds $f''(t) < 0, \ 
 \widetilde{f''}(t)<0$ and 
$f''(t) \widetilde{f''}(t) < {\widetilde{f'}}'(t)^2$, then $\mathcal O(F)$ is not convex at the point $(0,1)$. 

Based on these observations, we will prove :

\begin{theorem}\label{Theorem:critereconvexite} Let $F$ be the binary form corresponding to a polynomial $f$ with even degree $n\geq 4$, without real root, monic and reciprocal, written as in \eqref{f=} or \eqref{f=bis} .
\begin{itemize}
\item Suppose that $a_1\neq 0$. Then the fundamental domain $\mathcal O(F)$ is 
\begin{enumerate} 
\item convex at the point $(0,1)$ if 
\[
a_2> \frac{n-1}{2n} a_1^2,
\]
\item not convex at the point $(0,1)$ if 
\[
a_2< \frac{n-1}{2n} a_1^2.
\]
\end{enumerate}
\item Suppose that $a_1=0$, and that $h$ exists. Then the fundamental domain $\mathcal O(F)$ is
\begin{enumerate} [label=(\roman*)]
\item convex at the point $(0,1)$ if $h$ is even and $a_h >0$, 
\item not convex at the point $(0,1)$ if $h$ is odd or if $a_h<0$.
\end{enumerate}
\item Suppose that $a_1=0$ and that $h$ does not exist. Then the fundamental domain $\mathcal O(F)$ is convex. 
\end{itemize}
\end{theorem}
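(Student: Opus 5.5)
The plan is to expand the three quantities $f''(t)$, $\widetilde{f''}(t)$ and ${\widetilde{f'}}'(t)$ as polynomials in $t$ near $t=0$. We then read off the sign of the three conditions \eqref{Equation:BrunnHadamard2} for $t$ in a small interval $(-\varepsilon,\varepsilon)$, using the local criterion stated just before the theorem.

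\emph{General expansions.} Write $f(t)=\sum_{k=0}^n a_kt^k$ with $a_0=a_n=1$ and $a_k=a_{n-k}$. Then $f''(t)=\sum_k k(k-1)a_kt^{k-2}$. Its reversal (degree $n-2$) is
\[
\widetilde{f''}(t)=\sum_k k(k-1)a_kt^{n-k}=n(n-1)+(n-1)(n-2)a_1t+\cdots .
\]
Likewise $\widetilde{f'}(t)=\sum_k ka_kt^{n-k}$, so that
\[
{\widetilde{f'}}'(t)=\sum_k k(n-k)a_kt^{n-k-1}=(n-1)a_1+2(n-2)a_2t+\cdots .
\]
In particular $\widetilde{f''}(0)=n(n-1)>0$, so the second condition of \eqref{Equation:BrunnHadamard2} holds near $0$ in every case.

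\emph{Case $a_1\neq0$.} At $t=0$ we have $f''(0)=2a_2$, $\widetilde{f''}(0)=n(n-1)$ and ${\widetilde{f'}}'(0)=(n-1)a_1$. The determinant condition at $0$ therefore reads $2n(n-1)a_2\geqslant (n-1)^2a_1^2$, that is, $a_2\geqslant \frac{n-1}{2n}a_1^2$.
\begin{enumerate}
\item If $a_2>\frac{n-1}{2n}a_1^2$, then $a_2>0$ as well. By continuity all three inequalities are strict on a neighbourhood of $0$, so $\mathcal O(F)$ is convex at $(0,1)$.
\item If $a_2<\frac{n-1}{2n}a_1^2$, the determinant inequality fails strictly at $t=0$, hence on a neighbourhood of $0$, so $\mathcal O(F)$ is not convex at $(0,1)$.
\end{enumerate}

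\emph{Case $a_1=0$ with $h$ existing.} The lowest-order terms become
\[
f''(t)=h(h-1)a_ht^{h-2}\bigl(1+O(t)\bigr),\qquad \widetilde{f''}(t)=n(n-1)+O(t).
\]
In ${\widetilde{f'}}'$, the term $k=n$ contributes nothing and the first surviving term comes from $k=n-h$, which gives ${\widetilde{f'}}'(t)=h(n-h)a_ht^{h-1}+O(t^h)$. (If $h=n/2$ the two indices $h$ and $n-h$ coincide; this is harmless and only needs care in the bookkeeping.) Consequently
\[
f''\widetilde{f''}-({\widetilde{f'}}')^2=n(n-1)h(h-1)a_ht^{h-2}\bigl(1+O(t)\bigr)-O(t^{2h-2}).
\]
Since $2h-2>h-2$, the sign near $0$ is that of $a_ht^{h-2}$.
\begin{enumerate}[label=(\roman*)]
\item If $h$ is even and $a_h>0$, then $f''\geqslant0$ and the determinant is $\geqslant0$ on a small interval; equality at $t=0$ occurs when $h>2$, which is allowed. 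Hence $\mathcal O(F)$ is convex at $(0,1)$.
\item If $h$ is odd, $t^{h-2}$ changes sign at $0$. If $a_h<0$, then $f''(t)<0$ for small $t\neq0$ of suitable sign. In either situation $f''(t)<0$ for arbitrarily small $t$, so $\mathcal O(F)$ is not convex at $(0,1)$.
\end{enumerate}

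\emph{Case $a_1=0$ with no $h$.} Here $f=1+t^n$. Then $f''=\widetilde{f''}=n(n-1)t^{n-2}\geqslant0$ and $\widetilde{f'}=n$ is constant, so ${\widetilde{f'}}'=0$. Thus \eqref{Equation:BrunnHadamard2} holds on all of $(-1,1)$. By the reduction via Remarks~\ref{Remark:BrunnHadamard} and \ref{remark2.5} and Lemma~\ref{applicationofBrunnHadamard}, $\mathcal O(F)$ is globally convex.

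The main obstacle is the careful identification of the first nonzero coefficient of ${\widetilde{f'}}'$ and $\widetilde{f''}$ from the reversal. In particular one must check that no lower-order term in ${\widetilde{f'}}'$ (from indices near $n$ or from $h=n/2$) competes with $t^{h-1}$, so that the determinant is genuinely dominated by the $t^{h-2}$ term.
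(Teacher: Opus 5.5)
Your proposal is correct and follows essentially the same route as the paper: expand $f''$, $\widetilde{f''}$ and ${\widetilde{f'}}'$ near $t=0$ and compare leading terms in \eqref{Equation:BrunnHadamard2}. In the case $a_1\neq0$ you evaluate directly at $t=0$, which is just a cleaner packaging of the paper's split into $h=2$ versus $h\geqslant 3$. There is one harmless slip: for $f=1+t^n$ one has $\widetilde{f''}(t)=t^{n-2}f''(1/t)=n(n-1)$, a constant, not $n(n-1)t^{n-2}$; this does not affect your conclusion, since it is still nonnegative.
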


 \begin{proof} 
From \eqref{f=}
we deduce the equalities 
\[ 
\begin{aligned}
f'(t)&=a_1+ha_ht^{h-1}+\cdots+ 
 (n-h)a_ht^{n-h-1}+(n-1)a_1t^{n-2}+nt^{n-1}
 \\
 f''(t)&=h(h-1)a_ht^{h-2}+\cdots+i(i-1)a_it^{i-2}+\cdots+
 \\
 &
 \qquad  (n-h)(n-h-1)a_ht^{n-h-2}+(n-1)(n-2)a_1t^{n-3}+n(n-1)t^{n-2},
 \\
 \widetilde{f'}(t)&=n+(n-1)a_1t+(n-h)a_ht^h+\cdots+ha_ht^{n-h-1}+a_1t^{n-1},
 \\
 \widetilde{f''}(t)&=n(n-1)+(n-1)(n-2)a_1t+ (n-h)(n-h-1)a_h t^h+\cdots+h(h-1)a_ht^{n-2},
 \\
 {\widetilde{f'}}'(t)&=(n-1)a_1+h(n-h)a_ht^{h-1}+\cdots+h(n-h-1)a_ht^{n-h-2}+(n-1)a_1t^{n-2}.
 \end{aligned}
\]
Notice that we have $ \widetilde{f''}(t)>0$ in the vicinity of $t=0$. We have $f''(t) \geq 0$ in the vicinity of $t=0$ if and only if $a_h >0$ and $h$ even.

\noindent $\bullet$ When $a_1\neq 0$, we have the following expansions as $t$ tends to zero
\begin{equation}\label{346}
 f''(t)\, \widetilde{f''}(t) = hn (h-1)(n-1) a_h t^{h-2} + O(t^{h-1}),
 \end{equation}
and 
\begin{equation}\label{350}
\left[ {\widetilde{f'}}'(t)\right]^2= (n-1)^2a_1^2 +O (t).
\end{equation}
The conditions \eqref{Equation:BrunnHadamard2} are satisfied for sufficiently small $t$, when $h=2$ and $a_2 > \frac{n-1}{2n} a_1^2$. 
For $h\geq 3$ (so $a_2=0$) or for $a_2 < \frac{n-1}{2n} a_1^2,$ this condition is not satisfied and $\mathcal O (F)$ is not convex at the 
point $(0,1)$.

\noindent $\bullet$ When $a_1=0$ and $h$ exists, we replace \eqref{350} by the following expansion
\[
\left[ {\widetilde{f'}}'(t)\right]^2= h^2(n-h)^2
a_h^2t^{2h-2}+O(t^{2h-1}).
\]
Comparing with \eqref{346} we see that \eqref{Equation:BrunnHadamard2} is satisfied for any sufficiently small $t$, if and only if 
$h$ is even and $a_h >0$. Recall that $a_h \neq 0$ by definition.

\noindent $\bullet$ When $a_1=0$ and $h$ does not exist, the form $F$ has the shape $F(X,Y) =X^n+Y^n$ with even $n$. It is well known that $\mathcal O(F)$ is convex. A proof of this fact is recovered from Lemma \ref{applicationofBrunnHadamard} since $n$ is even and since $F''_{xy}(x,y)=0$ while $F''_{xx}(x,y)$ and $F''_{yy}(x,y)$ are $\geqslant 0$. 

Another proof of the convexity of the fundamental domain of $F(X,Y)=X^n+Y^n$ for even $n\geqslant 2$ is as follows.
The boundary of $\mathcal O (F)$ in the quadrant $\{(x,y) \mid x\geq 0, \, y\geq 0\}$ is given by the function
$x\in [0, 1] \mapsto (x, y(x))$ with
 \[
 y(x)=(1-x^n)^{1/n}.
\]
A direct computation gives the equality
\[
y''=- (n-1) x^{n-2} (1-x^n)^{(1/n)-2},
\]
which is negative for $0< x <1$. 
Hence $\mathcal O(F)$ is convex.
 \end{proof}
\begin{remark} Theorem \ref{Theorem:critereconvexite} gives no information when $a_1\neq 0$ and $a_2= (n-1)a_1^2/(2n)$. A precise study of this case requires hypotheses on the coefficients $a_{t}$
with $t \geq h+1$.
\end{remark}

 \section{Proof of convexity in Theorem \ref{Theorem} } 
 As mentioned in Remark \ref{remark2.5} it is sufficient to check the inequality \eqref{Equation:BrunnHadamard1} for $(x,y)\in \calO_m \cap \mathcal Q$, to obtain the convexity of $\calO_m$.
 
   We begin by the case $n=2^a$ with $a\geqslant 2$, we have
\[
 \Phi_{2^a}(X,Y)=X^{2^{a-1}}+Y^{2^{a-1}}.
 \]
The convexity of $\mathcal O_{2^a}$, and more generally of the fundamental domain of the binary form $F(X,Y)=X^n+Y^n$ when $n$ is even, is a consequence of Theorem \ref{Theorem:critereconvexite} (when $a_1=1$ and $h$ does not exist).

 When $m=2 m'$ with $m'$ odd we have 
 \begin{equation}\label{Equation:2m}
 \Phi_{2m'}(X,Y)=\Phi_{m'}(X,-Y).
 \end{equation}
 Therefore the convexity part of Theorem \ref{Theorem} follows from the next result with $m=p$. 
 
  \begin{lemma}
 Let $m\geqslant 2$ be an odd integer. Then the fundamental domain of the binary form
 \[
 F(X,Y)=X^{m-1}+X^{m-2}Y+\cdots+XY^{m-2}+Y^{m-1}=\frac{X^m-Y^m}{X-Y},
 \]
 namely
\[
\mathcal O(F)=
\{ (x,y)\in\R^2\; \mid \; |x^m-y^m|\leqslant |x-y|\},
\]
 is convex.
 \end{lemma}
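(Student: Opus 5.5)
The plan is to avoid the Hessian computations of \eqref{Equation:BrunnHadamard2} altogether. Instead I will prove that the polynomial function $F$ itself is convex on $\R^2$. Its sublevel set $\mathcal O(F)$ is then automatically convex: if $F(P)\leqslant 1$ and $F(Q)\leqslant 1$, then $F(\lambda P+(1-\lambda)Q)\leqslant \lambda F(P)+(1-\lambda)F(Q)\leqslant 1$. This is the easy direction mentioned before Lemma \ref{applicationofBrunnHadamard}.

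The key step is an integral representation of $F$. For fixed $(x,y)$ put $u(s)=sx+(1-s)y$ for $s\in[0,1]$. Then
\[
\frac{\rmd}{\rmd s}\, u(s)^m = m(x-y)\,u(s)^{m-1},
\]
so $x^m-y^m = m(x-y)\int_0^1 u(s)^{m-1}\,\rmd s$. Hence for $x\neq y$
\[
F(x,y)=\frac{x^m-y^m}{x-y}= m\int_0^1 \bigl(sx+(1-s)y\bigr)^{m-1}\,\rmd s .
\]
Both sides are polynomials in $(x,y)$, the right side because expanding the integrand gives polynomial coefficients. So the identity also holds on the diagonal $x=y$, where both sides equal $mx^{m-1}$.

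Now $m$ is odd, so $m-1$ is even and nonnegative, and $v\mapsto v^{m-1}$ is convex on $\R$. For each fixed $s\in[0,1]$, the map $(x,y)\mapsto (sx+(1-s)y)^{m-1}$ is this convex function composed with a linear form, hence convex on $\R^2$. An integral with the nonnegative weight $m\,\rmd s$ of a family of convex functions is convex: integrate the pointwise convexity inequality in $s$. Therefore $F$ is convex, and $\mathcal O(F)$ is convex. The equality $\mathcal O(F)=\{|x^m-y^m|\leqslant |x-y|\}$ is immediate from $F=(X^m-Y^m)/(X-Y)$ off the diagonal. On the diagonal, $(x,x)$ lies in both sets exactly when $m|x|^{m-1}\leqslant 1$, which holds for the left side and is consistent by continuity for the right.

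There is essentially no hard step once the integral formula is found; the only point needing care is the diagonal $x=y$, which the polynomial-identity argument settles. Alternatively, one could check the three inequalities of \eqref{Equation:BrunnHadamard2} for $f(t)=(1-t^m)/(1-t)$ on $(-1,1)$. The mixed inequality $f''\widetilde{f''}\geqslant (\widetilde{f'}{}')^2$ would then be the main obstacle, and the integral representation is precisely what bypasses it: it exhibits the Hessian of $F$ as $m(m-1)(m-2)\int_0^1 u^{m-3}\,(s,1-s)^{T}(s,1-s)\,\rmd s$, an average of positive semi-definite rank-one matrices since $u^{m-3}\geqslant 0$.
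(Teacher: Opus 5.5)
Your proof is correct and is essentially the paper's. Both rest on the identity $F(x,y)=m\int_0^1 L(s)^{m-1}\,\rmd s$ with $L(s)=sx+(1-s)y$, and they differ only in the last step. The paper differentiates twice under the integral, writes $F''_{xx}t^2+2F''_{xy}t+F''_{yy}=m(m-1)(m-2)\int_0^1(st+1-s)^2L(s)^{m-3}\,\rmd s\geqslant 0$, and then invokes Lemma~\ref{applicationofBrunnHadamard}. You instead conclude directly from the convexity of $v\mapsto v^{m-1}$, so you need only the trivial implication from convexity of $F$ to convexity of its sublevel set.

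One small correction to your diagonal remark: taken literally, $\{|x^m-y^m|\leqslant|x-y|\}$ contains the whole line $y=x$, since both sides vanish there. So it is not equal to $\{F\leqslant 1\}$, and it is not even convex. ``Consistent by continuity'' is therefore not right. The displayed description in the statement must be read as $\{F\leqslant 1\}$, which is the set you and the paper actually prove convex.
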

 
  \begin{proof}
 The idea behind the following proof was suggested by AI. 
 Let $(x,y)\in\R^2$ satisfy $|F(x,y)|\leqslant 1$. 
For $0\leqslant s\leqslant 1$ define the linear function $L(s)=sx+(1-s)y$. The derivative of $L(s)^m$ is $m(x-y)L(s)^{m-1}$.
Hence
\[
F(x,y)=m\int_0^1 L(s)^{m-1}\rmd s.
\]
We deduce 
\[
\begin{aligned}
F''_{xx}(x,y)&=m(m-1)(m-2)\int_0^1 s^2L(s)^{m-3}\rmd s,
\\
F''_{xy}(x,y)&=m(m-1)(m-2)\int_0^1 s(1-s)L(s)^{m-3}\rmd s,
\\
F''_{yy}(x,y)&=m(m-1)(m-2)\int_0^1 (1-s)^2L(s)^{m-3}\rmd s.
\end{aligned}
\]
As a consequence, for $t\in \R$, we have 
\[
F''_{xx}(x,y)t^2+2F''_{xy}(x,y)t+F''_{yy}(x,y)=
m(m-1)(m-2)\int_0^1 (st+1-s)^2L(s)^{m-3}\rmd s,
\]
which is $\geqslant 0$. Therefore the above quadratic form in $t$ is positive definite. We deduce that equation \eqref{Equation:BrunnHadamard1} holds and Lemma \ref{applicationofBrunnHadamard} can be applied.
\end{proof}
 
 \section{Proof of the non--convexity in Theorem \ref{Theorem}}
 Let $m\geqslant 3$ be an integer not of the form $m=p$, $2p$ nor $m=2^k$ with $p$ an odd prime and $k\geqslant 2$. Then $m$ satisfies one of the following conditions:
\begin{enumerate}
\item
$m$ has at least two different odd prime divisors, 
\item
$m=p^\ell$ with $p$ an odd prime and $\ell\geqslant 2$, 
\item
$m=2^a p^\ell$ with $p$ an odd prime, $a\geqslant 1$, $\ell \geqslant 1$ and $(a,\ell)\not=(1,1)$.
\end{enumerate}
In case (1) one deduces from Lemma \ref{Lemma:BSMF} 
\[
\Phi_m(0,1)=\Phi_m(1,1)=\Phi_m(-1,1)=1,
\]
hence the line $y=1$ has three points of intersection with the boundary of $\calO_m$, and therefore $\calO_m$ is not convex. 

The same is true in case (3) when $a\geqslant 2$. 

The case (3) with $a=1$ reduces to case (2), as shown by equation \eqref{Equation:2m}. 

It remains to show that in case (2) the set $\calO_m$ is not convex. Let $m=p^\ell$ with $p$ an odd prime and $\ell\geqslant 2$. 
From formula \eqref{Equation:Radical} we deduce
\[
\phi_m(t)=
\phi_{p^\ell}(t)=\phi_p(t^{p^{\ell-1}})=1+t^{p^{\ell-1}}+t^{2p^{\ell-1}}+\cdots+t^{(p-1)p^{\ell-1}},
\]
and the first non--zero derivative or order $\geqslant 2$ at $0$ of $\phi_m$ has an odd order, precisely $p^{\ell-1}$. Therefore, near $t=0$, we have $\phi''_m(t)>0$ for $t>0$, $\phi''_m(t)<0$ for $t<0$, hence the curve $\Phi_m(x,y)=1$ has an inflexion point at $(0,1)$, and therefore $\calO_m$ is not convex at this point. 

We may also remark that the convexity condition \eqref{Equation:BrunnHadamard2} is not satisfied since near $t=0$ we have $\phi''_m(t)<0$ for $t<0$, while $\widetilde{\phi''}_m(0)=\varphi(m)(\varphi(m)-1)>0$.

This completes the proof of Theorem \ref{Theorem}. 

\begin{remark} Using formula \eqref{Equation:Radical} we obtain 
for $p$ an odd prime, $a\geqslant 2$ and $\ell\geqslant 1$ the equalities 
\[
\phi_{2^a p^\ell}(t)=\phi_{2p}(t^{2^{a-1}p^{\ell-1}})= \phi_p (-t^{2^{a-1}p^{\ell -1}}) =1-t^{2^{a-1}p^{\ell-1}}+O(t^{4p^{\ell-1}}),
\] 
as $t$ tends to zero.
Theorem \ref{Theorem:critereconvexite} (with $a_1=0$, $h$ even and $a_h$ negative) shows that $\mathcal O_{2^ap^\ell}$ is not convex at $(0,1)$. 
 This gives another proof of the non--convexity of $\mathcal O_{2^ap^\ell}$ for $a\geqslant 2$. We deduce that if $\calO_m$ is not convex but is convex at $(0,1)$, then $m$ has at least two different odd prime divisors. 

 Also for $p_1$ and $p_2$ two odd primes, 
we have near $t=0$ 
\[
\phi_{p_1p_2}(t)=\frac {\phi_{p_1}(t^{p_2})}{\phi_{p_1}(t)}=1-t+O(t^3),
\] 
hence Theorem \ref{Theorem:critereconvexite} (with $a_1=-1$, $a_2=0$) shows that $\mathcal O_{p_1p_2}$ is not convex at $(0,1)$. 

On the other hand, since $105$ is the product of three odd primes, $\mathcal O_{105}$ is not convex by Theorem \ref{Theorem}. However it is convex at $(0,1)$, since we have
\[
\begin{aligned}
&\Phi_{105}(t) = 1+t+ t^2-t^5-t^6-2t^7-t^8-t^9+t^{12}+t^{13}+t^{14}+t^{15}+t^{16}+t^{17}-t^{20}-t^{22}-t^{24}
\\ 
& \quad - t^{26} - t^{28} + t^{31} +t^{32} + t^{33} + t^{34} + t^{35} + t^{36} - t^{39} - t^{40} - 2 t^{41} - t^{42} - t^{43} + t^{46} + t^{47} + t^{48} .
\end{aligned} 
\]
which allows to apply Theorem \ref{Theorem:critereconvexite} below with $a_1=1$, $a_h=a_2=1$ and $m =48$. More generally, Theorem \ref{productofPhi} with $s_1=1$, $s_2=s_3=0$ shows that when $m$ is odd with $\mu(m)=-1$, then $\calO_m$ is convex at $(0,1)$. The same is true when $m$ is even with $\mu(m)=1$.
\end{remark}

 \section{Multicyclotomy}
 A natural question is to control the convexity of the fundamental domain $\mathcal O (F_1\cdot F_2)$\break when $F_1$ and $F_2$ are two definite not proportional binary forms. This program is very ambitious~: the convexity (or not) of $\mathcal O( F_1)$ and of $\mathcal O(F_2)$ has no obvious link with the convexity (or not) of $\mathcal O (F_1\cdot F_2)$. For instance $\mathcal O (100 x^2+y^2)$ and $ \mathcal O ( x^2+ 100 y^2)$ are both convex;
 however $\mathcal O ( (100x^2+y^2) (x^2+ 100 y^2))$  is not convex.

 In \cite{FW2026}, we defined the set of {\em multicyclotomic} binary forms
 \[
 \Phi_{\bfm}(X,Y)= \Phi_{m_1}(X,Y)\Phi_{m_2}(X,Y)\dots \Phi_{m_s}(X,Y),
 \]
where $\bfm=(m_1,\dots,m_s)$ with $s\geqslant 1$, $1\leqslant m_1<\cdots<m_s$.
Such a form is totally positive if and only if $m_1\geqslant 3$. 

\bigskip
\noindent
{\bf Question.} 
{\em For which compositions $\bfm$ with $s\geqslant 1$ and $m_1\geqslant 3$ is the set
\[ 
\mathcal O_{\bfm}:=
\{ (x,y)\in\R^2\; \mid \; |\Phi_{\bfm}(x,y)|\leqslant 1\}
\] 
convex? 
}\\
Theorem \ref{Theorem} completely answers to this question when $s=1$.

\subsection{A first example of non convexity}

Let $\bfm=(m_1,\dots,m_s)$ with $s\geqslant 1$, $3\leqslant m_1<\cdots<m_s$.
Assume that for $1\leqslant i\leqslant s$, we have 

$\bullet$
either $\omega(m_i)\geqslant 2$ and $m_i\not \equiv 2 \pmod 4$

$\bullet$
or $2\Vert m_i$ and $\omega(m_i/2)\geqslant 2$.
\\
Then from Lemma \ref{Lemma:BSMF} we deduce
\[
\Phi_{\bfm}(0,1)=\Phi_{\bfm}(1,1)=\Phi_{\bfm}(-1,1)=1,
\]
hence $\mathcal O_{\bfm}$ is not convex. 

\subsection{Bicyclotomy with powers of $2$}\label{S:Bicyclotomy}
We consider the case $k=2$, $\bf m = (m_1,m_2)$ with $m_1$ and $m_2$ powers of $2$. This case of bicyclotomy seems to be hard to be treated in full generality. So we are only concerned with the convexity of the fundamental domain $\mathcal O_{2^a, 2^b}$ with $b>a$. 

We have

\begin{theorem} \label{658}Let $a>b\geq 2$ be two powers of $2$ and let $F_{a,b}(X,Y)$ be the positive binary symmetrical form
\[
F_{a,b} (X,Y) = (X^a+Y^a)(X^b+Y^b)=\Phi_{2a} (X,Y) \Phi_{2b} (X,Y).
\]
Then the corresponding fundamental domain $\mathcal O(F_{a,b})$ is convex.
\end{theorem}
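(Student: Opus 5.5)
The plan is to apply Lemma~\ref{applicationofBrunnHadamard} together with the reduction \eqref{Equation:BrunnHadamard2}. Put $n=a+b$ and
\[
f(t)=F_{a,b}(t,1)=(1+t^a)(1+t^b)=1+t^b+t^a+t^{a+b}.
\]
This $f$ is monic and reciprocal of even degree $n$. It has no real root, since $a,b$ are even. The set $\mathcal O(F_{a,b})$ is symmetric with respect to the origin and to the line $y=x$, as in Remark~\ref{remark2.5}. It is also symmetric under $y\mapsto -y$, because all exponents are even. So by Remark~\ref{Remark:BrunnHadamard} it suffices to check the three inequalities of \eqref{Equation:BrunnHadamard1} at every point $(x,y)\in\R^2$. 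I will in fact check them everywhere, i.e.\ show that $F=F_{a,b}$ is a convex function; this directly gives convexity of the sublevel set.

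Write $u=x^a+y^a$ and $v=x^b+y^b$, so $F=uv$. For a direction $w=(w_1,w_2)$,
\[
w^{T}H_F w = u\,w^TH_vw+v\,w^TH_uw+2(\nabla u\cdot w)(\nabla v\cdot w),
\]
with
\[
w^TH_uw=a(a-1)\bigl(x^{a-2}w_1^2+y^{a-2}w_2^2\bigr),\qquad \nabla u\cdot w=a(x^{a-1}w_1+y^{a-1}w_2),
\]
and the analogous formulas for $v$. Write this quadratic form as $Aw_1^2+Bw_1w_2+Cw_2^2$. All exponents in
\[
A=b(b-1)x^{b-2}(x^a+y^a)+a(a-1)x^{a-2}(x^b+y^b)+2ab\,x^{a+b-2}
\]
are even, so $A\geqslant 0$, and likewise $C\geqslant 0$. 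These are the first two conditions of Lemma~\ref{Lemma:BrunnHadamard}, equivalently $f''\geqslant 0$ and $\widetilde{f''}\geqslant 0$ on $(-1,1)$. The only nontrivial point is $4AC\geqslant B^2$, where
\[
B=2ab\,(X+Y),\qquad X=x^{a-1}y^{b-1},\quad Y=x^{b-1}y^{a-1},\quad XY=(xy)^{a+b-2}.
\]

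This discriminant inequality is the main obstacle. The target is $AC\geqslant a^2b^2(X^2+2XY+Y^2)$. I would expand $AC$ into monomials $x^{\alpha}y^{\beta}$ with $\alpha+\beta=2(a+b-2)$ and match terms.
\begin{itemize}
\item The product $(2abx^{a+b-2})(2aby^{a+b-2})$ gives $4a^2b^2XY$.
\item The terms $a(a-1)x^{a-2}y^b\cdot b(b-1)x^ay^{b-2}$ and its mirror give $a(a-1)b(b-1)(X^2+Y^2)$.
\end{itemize}
Since $(a-1)(b-1)<ab$, these alone do not dominate $a^2b^2(X^2+Y^2)$. The deficit must be recovered from the remaining monomials by weighted AM--GM. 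There are many spare nonnegative terms, for instance $a(a-1)x^{a+b-2}\cdot b(b-1)y^{a+b-2}$ (a further multiple of $XY$), and the pair $2abx^{a+b-2}\cdot b(b-1)x^ay^{b-2}$ with its $a(a-1)$-analogues. Pairs of these whose exponents average to those of $X^2$ can be used. I would first try to pair $x^{2a+b-2}y^{b-2}$ (coefficient $2ab^2(b-1)$) with $x^{2a-b-2}y^{3b-2}$ or with the pure-$y$ terms, choosing the pairing so that the geometric mean is exactly $x^{2a-2}y^{2b-2}$.

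If a clean monomial matching proves awkward, I would fall back on the one-variable form \eqref{Equation:BrunnHadamard2}: set $x=1$, $y=t$ with $|t|<1$ and prove
\[
f''(t)\,\widetilde{f''}(t)\geqslant \widetilde{f'}'(t)^2 \quad\text{on } [0,1).
\]
Only $t\geqslant 0$ is needed, by evenness. There one can use $\widetilde{f''}(t)\geqslant n(n-1)$, $\widetilde{f'}'(t)=ab(t^{b-1}+t^{a-1})+\cdots$, and the bound $f''(t)\geqslant b(b-1)t^{b-2}$, comparing the lowest-order terms as in the proof of Theorem~\ref{Theorem:critereconvexite}. The hypothesis that $a,b$ are powers of $2$ should only enter through the evenness of all exponents, which is what makes every term of $A$ and $C$ nonnegative.
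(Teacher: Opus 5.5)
Your reduction is sound. Proving that $F_{a,b}$ is a convex function suffices, and $F''_{xx},F''_{yy}\geqslant 0$ because every exponent is even. You also locate the obstruction correctly: in $F''_{xx}F''_{yy}-(F''_{xy})^2$ the only negative monomials are $-ab(a+b-1)\,x^{2a-2}y^{2b-2}$ and its mirror, coming from the deficit $a^2b^2-a(a-1)b(b-1)$.

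\textbf{The gap.} That inequality is the whole content of the theorem, and you only describe how you would attack it. The concrete hints do not work as written.
\begin{itemize}
\item The monomial $x^{2a-b-2}y^{3b-2}$ does not occur in $F''_{xx}F''_{yy}$ unless $a=2b$. The $x$-exponents that occur are $2a+b-2$, $a+2b-2$, $a+b-2$, $2a-2$, $2b-2$, $a-2$, $b-2$.
\item There is no pure-$y$ monomial unless $b=2$.
\item The coefficient $2ab^2(b-1)$ you attach to $x^{2a+b-2}y^{b-2}$ is only part of its coefficient, and it is too small on its own: for $b=2$ it is $8a<2a(a+1)=ab(a+b-1)$.
\item The fallback fails as well. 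From $\widetilde{f''}\geqslant n(n-1)$ and $f''(t)\geqslant b(b-1)t^{b-2}$ you only get $f''\widetilde{f''}\geqslant b(b-1)n(n-1)t^{b-2}$. Meanwhile $\widetilde{f'}'(t)^2=a^2b^2(t^{a-1}+t^{b-1})^2\to 4a^2b^2$ as $t\to 1$. For $(a,b)=(4,2)$ this is $60$ against $256$.
\item Comparing lowest-order terms, as in Theorem~\ref{Theorem:critereconvexite}, only gives convexity near $(0,1)$, not on all of $(-1,1)$.
\end{itemize}

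\textbf{What closes it.} You need an explicit coefficient comparison.
\begin{itemize}
\item The paper restricts to $|x|\geqslant |y|$. It bounds both negative monomials by the single positive monomial $b(b-1)(a+b)(a+b-1)\,x^{2a+b-2}y^{b-2}$, which requires $(b-1)(a+b)\geqslant 2a$, hence $b\geqslant 4$.
\item For $b=2$ the paper uses that monomial only against $x^{2a-2}y^{2b-2}$. It uses $a(a-1)(a+b)(a+b-1)\,x^{a-2}y^{a+2b-2}$ against $x^{2b-2}y^{2a-2}$.
\item Your AM--GM idea also works, globally and with no case split, if you pair $x^{2a+b-2}y^{b-2}$ with its mirror $x^{b-2}y^{2a+b-2}$ and use the full coefficient. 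Factor out $(xy)^{b-2}$. Weighted AM--GM then gives $x^{2a-b}y^{b}+x^{b}y^{2a-b}\leqslant x^{2a}+y^{2a}$. So the negative part is dominated as soon as $ab(a+b-1)\leqslant b(b-1)(a+b)(a+b-1)$, i.e.\ $a\leqslant (b-1)(a+b)$, which holds for every $b\geqslant 2$.
\end{itemize}
As submitted, however, the decisive inequality is neither stated precisely nor verified, so the proposal does not prove the theorem.
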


\begin{proof} To shorten notations, write $F=F_{a,b}$. We have the list of equalities
\[
F= X^{a+b} +X^aY^b +X^b Y^a+ Y^{a+b},
\]
\[
F'_x = (a+b) X^{a+b-1} + a X^{a-1} Y^b + b X^{b-1} Y^a,
\]
\[
F'_y= bX^a Y^{b-1}+aX^bY^{a-1} +(a+b)Y^{a+b-1},
\]
\[
F''_{xx}= (a+b)(a+b-1) X^{a+b-2} +a (a-1) X^{a-2} Y^b +b(b-1)X^{b-2} Y^a,
\]
\[
F''_{yy}= b(b-1) X^aY^{b-2} + a (a-1) X^b Y^{a-2}+(a+b)(a+b-1) Y^{a+b-2}.
\]
\[
F''_{x,y}= ab X^{a-1}Y^{b-1}+ab X^{b-1}Y^{a-1}.
\]
The formula for the Hessian $H(x,y)$ of $H$ is
\begin{align*}H(X,Y)=F''_{xx}F''_{yy}-(F''_{xy})^2 =& b (b-1) (a+b)(a+b-1) X^{2a+b-2} Y^{b-2}\\
&-ab (a+b-1)X^{2a-2} Y^{2b-2}\\
&+a(a-1)(a+b)(a+b-1)X^{a+2b-2} Y^{a-2}\\ 
&+2 (a+b-1)((a^2+b^2)(a+b-1)-ab)X^{a+b-2} Y^{a+b-2}\\
&+a(a-1)(a+b)(a+b-1) X^{a-2} Y^{a+2b-2}\\
&-ab (a+b-1)X^{2b-2} Y^{2a-2}\\
&+b(b-1)(a+b)(a+b-1)X^{b-2}Y^{2a+b-2}\\
&= A-B+C+D+E-G+J,
\end{align*}
by definition.
We must prove that for all $(x,y)\in \mathbb R^2$ one has 
\begin{equation}\label{H>0}H(x,y)\geq 0.
\end{equation} By symmetry, we may restrict to the case $x\geq y$.
Firstly we study sufficient conditions (on $a$ and $b$) to ensure the inequaliy
\begin{equation}\label{A>B+G}A(x,y)\geq B(x,y)+G(x,y),
\end{equation} for every $x$ and $y$. 
Indeed we have, for every $x$ and $y$ and every powers of $2$, $a>b\geq 2$, the inequalities 
\begin{equation}\label{in1}x^{2a+b-2}y^{b-2}\geq x^{2a-2} y^{2b-2},
\end{equation}
and 
\begin{equation}\label{in2} x^{2a+b-2}y^{b-2} \geqslant x^{2b-2} y^{2a-2}.
\end{equation}
We now study the coefficient $A(1,1)-(B(1,1) +G(1,1))$. It has the value 
\[
\begin{aligned}
 b (b-1) (a+b)(a+b-1)-\left( ab (a+b-1)+ ab (a+b-1)\right)\nonumber \\= b(a+b-1) \left( (b-1)(a+b)-2a\right)\geq 0,\label{703}
 \end{aligned}
 \]
if we suppose that $b\geq 4$. Combining \eqref{in1}, \eqref{in2} and \eqref{703}, we proved \eqref{A>B+G}. By positivity, we proved 
\eqref{H>0} under the restriction $b\geq 4$.

We now suppose that $b=2$. Notice that we have $a\geq 4$. By the above calculations, we already know that
\begin{equation}\label{A>B} A(x,y) \geq B(x,y).
\end{equation}
 We take into account the term $E(x,y)$ in order to prove that 
 \begin{equation}\label{E>G}E(x,y) \geq G(x,y).
 \end{equation}
 We notice that we have 
\begin{equation}\label{717}
x^{a-2}y^{a+2b-2}\geq x^{2b-2}y^{2a-2}
\end{equation}
for $x\geq y $, $a \geq 4$ and $b=2$. Secondly, under the same conditions, we also have the inequality
\begin{equation}\label{721}
a(a-1)(a+b)(a+b-1)\geq ab (a+b-1)\geq 0.
\end{equation}
Combining \eqref{717} and \eqref{721}, we deduce \eqref{E>G}. By positivity and by \eqref{A>B} we deduce that
\[
H(x,y)\geq (A(x,y) -B(x,y)) +(E(x,y)-G(x,y)) \geq 0.
\]
 The proof of \eqref{H>0} is complete. This ends the proof of Theorem \ref{658}.
\end{proof} 
 
  \subsection{Convexity at the point $(0,1)$}\label{S:Near(0,1)}
  
  We recall some values of the first coefficients of the cyclotomic polynomials $\phi_m$
\cite[(4.3)]{HM2021}. 
\begin{lemma}\label{valuesofcoeff} Let $m\geq 2$ and $\phi_m(t)$ be the cyclotomic polynomial, written as 
\[
\phi_m(t) = 1+\nu_1 t+\nu_2 t^2 +O(t^3)
\]
in the vicinity of $t=0$. We then have 
\[
\nu_1 =-\mu (m),
\]
and 
\[
\nu_2 = \begin{cases}
[\mu^2 (m)-\mu (m)]/2 \text{ if } $m$ \text{ is odd,}\\
\\
[\mu^2 (m)-\mu (m)]/2-\mu (m/2) \text{ if } $m$ \text{ is even.}
\end{cases}
\]
\end{lemma}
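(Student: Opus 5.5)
We would compute the expansion of $\phi_m$ at $0$ through the logarithmic form of the Möbius product formula. For $m\geqslant 2$ we have $\sum_{d\mid m}\mu(d)=0$, so
\[
\phi_m(t)=\prod_{d\mid m}(1-t^d)^{\mu(m/d)}
\]
holds as an identity of power series near $t=0$. Modulo $t^3$, only the factors with $d=1$ and $d=2$ matter, since $(1-t^d)\equiv 1 \pmod{t^3}$ for $d\geqslant 3$. Writing $\mu=\mu(m)$ and $\mu'=\mu(m/2)$ when $m$ is even (and $\mu'=0$ when $m$ is odd), we obtain
\[
\phi_m(t)\equiv (1-t)^{\mu}(1-t^2)^{\mu'} \pmod{t^3}.
\]

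Next we expand each factor with the generalized binomial series, valid for any integer exponent:
\[
(1-t)^{\mu}=1-\mu t+\frac{\mu(\mu-1)}{2}t^2+O(t^3),\qquad (1-t^2)^{\mu'}=1-\mu' t^2+O(t^3).
\]
Multiplying and reading off coefficients gives $\nu_1=-\mu(m)$ and $\nu_2=\frac{\mu(m)^2-\mu(m)}{2}-\mu'$. This is exactly the stated formula: in the odd case $\mu'=0$, and in the even case $\mu'=\mu(m/2)$.

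The only point needing care is the justification of the product formula as an identity of power series with negative exponents. It follows from $1-t^m=\prod_{d\mid m}\phi_d(t)$ by Möbius inversion applied to logarithms: since $\log(1-t^d)$ is a well-defined power series with zero constant term, we can invert $\log(1-t^m)=\sum_{d\mid m}\log\phi_d(t)$, provided we normalize the constant terms. The sign convention $\phi_1(t)=t-1$ introduces a factor $-1$. However, $\prod_{d\mid m}(-1)^{\mu(m/d)}=(-1)^{\sum_{d\mid m}\mu(d)}=1$ for $m\geqslant 2$, so the sign disappears. This is consistent with $\phi_m(0)=1$ from Lemma \ref{Lemma:BSMF}.

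Apart from this normalization, the argument is routine.
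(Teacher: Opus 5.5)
Your argument is correct. The paper does not prove this lemma itself; it quotes it from \cite[(4.3)]{HM2021}. Your route is the standard derivation and makes the statement self-contained: you reduce the M\"obius product $\phi_m(t)=\prod_{d\mid m}(1-t^d)^{\mu(m/d)}$ modulo $t^3$ to $(1-t)^{\mu(m)}(1-t^2)^{\mu(m/2)}$ (second factor omitted for odd $m$) and expand with the binomial series. The result agrees with $\phi_4=1+t^2$, $\phi_6=1-t+t^2$ and $\phi_{12}=1-t^2+t^4$.

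I have two cosmetic remarks on your normalization paragraph.

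First, the identity is $t^m-1=\prod_{d\mid m}\phi_d(t)$, not $1-t^m=\prod_{d\mid m}\phi_d(t)$. You compensate for this afterwards, but the write-up is muddled.

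Second, logarithms are unnecessary. M\"obius inversion holds in any abelian group, in particular in $\mathbb{Q}(t)^\times$, and gives $\phi_m(t)=\prod_{d\mid m}(t^d-1)^{\mu(m/d)}$ at once. Your observation $\sum_{d\mid m}\mu(d)=0$ for $m\geqslant 2$ then turns each $t^d-1$ into $1-t^d$. Since every factor $(1-t^d)^{\pm 1}$ is a power series with constant term $1$, the identity holds in $\mathbb{Q}[[t]]$. If you keep the logarithmic route, note that $\phi_d(0)=1$ for $d\geqslant 2$ is an input you need in order to define $\log\phi_d(t)$, not just a consistency check.
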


In other terms, for $m$ odd $\geqslant 3$ we have
\[
\phi_m(t)=
\begin{cases}
1+t+t^2+O(t^3) & \text { if $\mu(m)=-1$,}
\\
1-t+ O(t^3) & \text { if $\mu(m)=1$,}
\\
1+ O(t^3) & \text { if $\mu(m)=0$}
\end{cases}
\]
while for $m$ even $\geqslant 4$ we have
\[
\phi_m(t)=
\begin{cases}
1+t+O(t^3) & \text { if $\mu(m)=-1$,}
\\
1-t+t^2+ O(t^3) & \text { if $\mu(m)=1$,}
\\
1- \mu(m/2)t^2+O(t^3) & \text { if $\mu(m)=0$.}
\end{cases}
\]
Notice that if $m$ is odd $\geqslant 3$ and $\mu(2m)\not=0$, then $\phi_{2m}(t)=\phi_m(-t)$ and $\mu(2m)=-\mu(m)$.

Combining Theorem \ref{Theorem:critereconvexite} with Lemma \ref{valuesofcoeff} we obtain the following:

\begin{theorem}\label{productofPhi} 
\begin{enumerate}
\item 
Let $s_1,s_2,s_3$ be non negative integers, $s=s_1+s_2+s_3$ and $m_1,\dots,m_s$ distinct odd integers $\geqslant 3$ satisfying
\[
\begin{cases}
&\#\{i\;\mid \; 1\leqslant i\leqslant s, \; \mu(m_i)=-1\}=s_1,
\\
&\#\{i\;\mid \; 1\leqslant i\leqslant s, \; \mu(m_i)=0\}=s_2,
\\
&\#\{i\;\mid \; 1\leqslant i\leqslant s, \; \mu(m_i)=1\}=s_3.
\end{cases}
\]
Assume $s_1>s_2$. 
Then the fundamental domain $\mathcal O (\Phi_{m_1} \cdots \Phi_{m_s})$ is convex at the point $(0,1)$.
\item Let $p_1,\dots,p_s$ be distinct odd primes and $k_1,\dots,k_s$ integers, all of them $\geqslant 2$. Set $q_i=p_i^{k_i}$ ($1\leqslant i\leqslant s$). Then the fundamental domain $\mathcal O (\Phi_{q_1} \cdots \Phi_{q_s})$ is not convex at the point $(0,1)$, hence is not convex.
\end{enumerate}
\end{theorem}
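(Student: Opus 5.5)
The plan is to compute the beginning of the Taylor expansion at $t=0$ of $f=\phi_{m_1}\cdots\phi_{m_s}$ from Lemma \ref{valuesofcoeff}, and then apply Theorem \ref{Theorem:critereconvexite}. The product $f$ is monic and reciprocal, being a product of monic reciprocal polynomials. It has no real root because every $m_i\geqslant 3$. Its degree $n=\sum\varphi(m_i)$ is even and at least $4$ as soon as $s\geqslant 2$. The case $s=1$ is covered either by Theorem \ref{Theorem} or by the same computation, provided $n\geqslant 4$; the trivial case $n=2$, i.e. $m=3$, is convex anyway.

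For part (1), each odd $m_i$ contributes a factor $1+t+t^2+O(t^3)$, $1+O(t^3)$ or $1-t+O(t^3)$ according as $\mu(m_i)=-1,0,1$. Multiplying gives
\[
f(t)=(1+t+t^2)^{s_1}(1-t)^{s_3}+O(t^3)=1+a_1t+a_2t^2+O(t^3),
\]
with
\[
a_1=s_1-s_3,\qquad a_2=s_1+\binom{s_1}{2}+\binom{s_3}{2}-s_1s_3 .
\]
Note that $a_2=\tfrac12\bigl((s_1-s_3)^2+s_1+s_3\bigr)$. Here the hypothesis as stated makes me suspicious that the $\mu=0$ factors should enter. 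For odd $m$ with $\mu(m)=0$, $\nu_2=0$, so these factors contribute nothing at order $2$. However, they increase $n$, and $n$ appears in the threshold $\frac{n-1}{2n}a_1^2<\frac12 a_1^2$.

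If $a_1\neq0$, it suffices to show
\[
a_2=\tfrac12 a_1^2+\tfrac12(s_1+s_3)>\tfrac{n-1}{2n}a_1^2,
\]
and this holds as soon as $s_1+s_3>0$. That is guaranteed because $s_1>s_2\geqslant0$. So alternative (1) of the first bullet of Theorem \ref{Theorem:critereconvexite} gives convexity at $(0,1)$. If $a_1=0$, then $s_1=s_3\geqslant1$ and $a_2=s_1>0$, so $h=2$ is even with $a_h>0$, and case (i) of the second bullet applies.

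The main obstacle is reconciling this with the stated hypothesis $s_1>s_2$, which my computation does not seem to need. Before finalizing, I would recheck the expansions of Lemma \ref{valuesofcoeff}, in particular whether $\mu=0$ factors could contribute a negative $t^2$ term. For odd $m$ this cannot happen by the lemma. I would then present the argument under the stated hypothesis, noting where it is used: it ensures $s_1\geqslant1$.

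For part (2), formula \eqref{Equation:Radical} gives
\[
\phi_{q_i}(t)=1+t^{p_i^{k_i-1}}+O\bigl(t^{2p_i^{k_i-1}}\bigr).
\]
Let $e=\min_i p_i^{k_i-1}$. This is an odd integer at least $3$, attained for a single $i$ since the primes are distinct. Hence
\[
f(t)=1+t^{e}+O(t^{e+1}),
\]
so $a_1=0$ and $h=e$ is odd. It satisfies $h\leqslant n/2$ because $e\leqslant\varphi(q_i)/2=p_i^{k_i-1}(p_i-1)/2$. Case (ii) of the second bullet of Theorem \ref{Theorem:critereconvexite} then gives non-convexity at $(0,1)$, and hence non-convexity of the domain.
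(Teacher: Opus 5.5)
Your strategy is the paper's: expand $f$ to order $2$ via Lemma \ref{valuesofcoeff}, then apply Theorem \ref{Theorem:critereconvexite}. Your part~2 is correct and matches the paper's argument; you also check $h\leqslant n/2$, which the paper leaves implicit.

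Part~1, however, fails at the simplification of the $t^2$ coefficient. Your expression $s_1+\binom{s_1}{2}+\binom{s_3}{2}-s_1s_3$ is right. But it equals $\frac12\bigl((s_1-s_3)^2+s_1-s_3\bigr)=\frac12a_1(a_1+1)$, not $\frac12\bigl((s_1-s_3)^2+s_1+s_3\bigr)$. Already $(1+t+t^2)(1-t)=1-t^3$ shows that $a_2=0$ when $s_1=s_3=1$. With the correct value, $a_2-\frac{n-1}{2n}a_1^2=\frac{a_1(a_1+n)}{2n}$. Since $|a_1|\leqslant s\leqslant n/2$, this is positive if and only if $a_1>0$. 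Two consequences follow:
\begin{itemize}
\item for $s_1<s_3$, the second alternative of the first bullet of Theorem \ref{Theorem:critereconvexite} gives \emph{non}-convexity at $(0,1)$, for instance for $\calO_{15}$, as in the paper's remark on $\calO_{p_1p_2}$;
\item for $s_1=s_3$, one has $a_1=a_2=0$, so your claim $a_2=s_1>0$ is false and the second-order terms decide nothing.
\end{itemize}

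Your suspicion about the hypothesis was justified, but the correct resolution is the opposite of yours. The condition the argument needs is $s_1>s_3$, that is, more factors with $\mu=-1$ than with $\mu=+1$. The factors with $\mu=0$ only enlarge $n$, which is harmless. This is what the paper's proof really uses. It writes $f(t)=(1+t+t^2)^{s_1}(1-t)^{s_2}+O(t^3)$, so in the proof $s_2$ counts the factors with $\mu=1$; the labels $s_2,s_3$ of the statement are interchanged. It then gets $a_1=s_1-s_2>0$ and $a_2=\frac12a_1(a_1+1)>\frac{n-1}{2n}a_1^2$.

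Read literally, with $s_2=\#\{i\mid\mu(m_i)=0\}$, item~1 is false. For $(m_1,m_2)=(3,15)$ one has $s_1=1>0=s_2$, yet $\phi_3\phi_{15}=(t^{15}-1)/(t^5-1)=1+t^5+t^{10}$. So $a_1=0$ and $h=5$ is odd, and Theorem \ref{Theorem:critereconvexite} gives non-convexity at $(0,1)$. Your argument reaches the literal statement only through the sign slip. The repair is to assume $s_1>s_3$ and run your computation with $a_1=s_1-s_3>0$.
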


For instance from the first item with $s_2=0$ we deduce that if $p_1<p_2<\cdots<p_s$ are distinct odd primes and $k_1,\dots,k_s$ are positive integers with one at least of them equal to $1$, if we write $q_i=p_i^{k_i}$ ($1\leqslant i\leqslant s$), then the fundamental domain $\mathcal O (\Phi_{q_1} \cdots\Phi_{q_s})$ is convex at the point $(0,1)$. Item 2 shows that the assumption $\min\{k_1,\dots,k_s\}\geqslant 2$ is necessary. 

\begin{proof} 
\null\hfill
\\
1. Let $f:=\Phi_{m_1} \cdots \Phi_{m_s}$. According to Lemma \ref{valuesofcoeff} we have
\[
\begin{aligned}
f(t) &= (1+t+t^2)^{s_1}(1-t)^{s_2} +O(t^{3})
\\
&= 1+(s_1-s_2)t+ \frac 1 2 ({s_1(s_1+1)} + s_2(s_1-1)-2s_1s_2) t^2+O(t^{3})
\\
&= 1+(s_1-s_2)t+ \frac 1 2 (s_1-s_2)(s_1-s_2+1) t^2+O(t^{3}).
\end{aligned}
\]
Since $s_1>s_2$, we have
\[
(s_1-s_2)(s_1-s_2+1)/2>(s_1-s_2)^2(n-1)/2n.
\]
Item 1 then follows from Theorem \ref{Theorem:critereconvexite}. 
\\
2. From formula \eqref{Equation:Radical} we deduce that for $p$ an odd prime and $k\geqslant 2$ we have
\[
\Phi_{p^k} (t) = 1+ t^{p^{k-1}}+O(t^{2p^{k-1}}).
\]
Assume without loss of generality $p_1^{k_1-1}<\cdots<p_s^{k_s-1}$. We deduce
\[
\Phi_{q_1}(t) \cdots\Phi_{q_s}(t)= 1 + t^{p_1^{{k_1-1}}}+O(t^{p_1^{k_1-1}+1})
\]
and we apply Theorem \ref{Theorem:critereconvexite} with $a_1=0$, and the odd value of $h=p_1^{k_1-1}$.
\end{proof}

Further examples could easily be given, but a general picture is not clear. 
%%%%%%%%%%%%%%%%%%%%%
%%%%%%%%%%%%%%%%%%%%%%%%%%%%%%%%%%%%%%%%%%%%%%%%%%%%%%%%%%%%%% 

\section*{Pictures}

{\footnotesize
Here are some pictures of the fundamental domain $\calO_m$: for $m\in\{3,7\}$, $m\in\{6,10\}$ and $m\in\{4,8\}$ the domain $\calO_m$ is convex, for $m\in\{15, 105\}$, $m\in\{9, 25\}$ and $m\in\{12, 20\}$ the domain $\calO_m$ is not convex. 

Recall Theorem 1.5 of \cite{FW2020}: when $m$ is large, $m>m_0(\epsilon)$, the fundamental domain $\calO_m$ contains the square centered at the origin and side $2-m^{-1+\epsilon}$ and is contained in the square centered at the origin and side $2+m^{-1+\epsilon}$. 
 In particular, for a very large $m$, it is difficult to guess only by looking at the picture whether $\calO_m$ is convex or not.} 
 
  \includegraphics[height= 15.5 true cm]{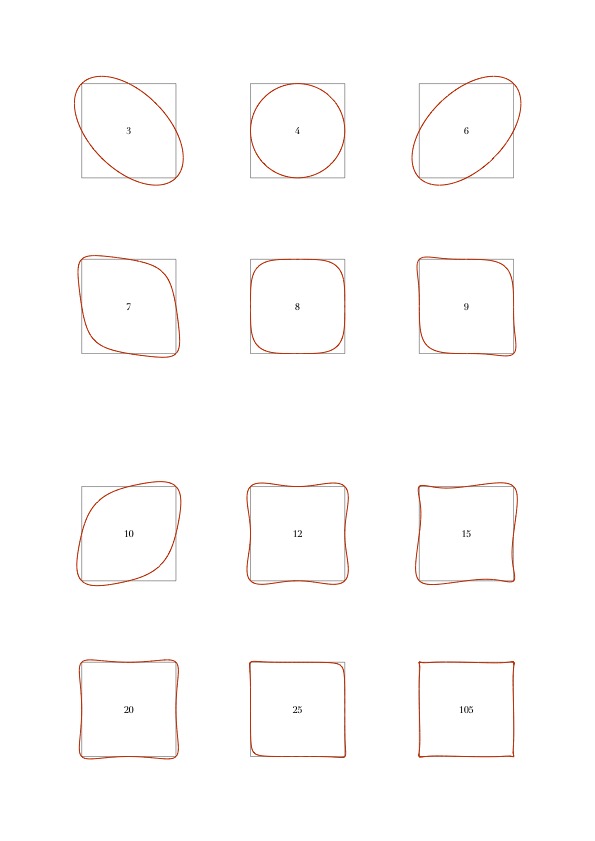}

\newpage

A picture of the fundamental domain $\mathcal O_{2^3,2^4}$, which is convex (\S \ref{S:Bicyclotomy}). 
\bigskip
\begin{center}

\includegraphics[height= 4 true cm]{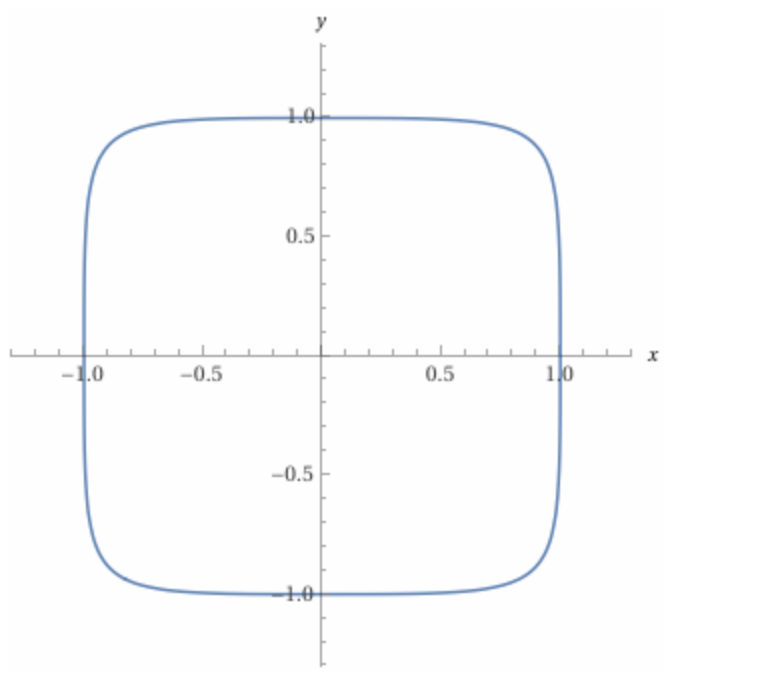}

\end{center}

\bigskip

Here is a picture of the fundamental domain $\mathcal O_{3,5}$, which is convex near $(0,1)$ (\S \ref{S:Near(0,1)}). 
\bigskip
\begin{center}

\includegraphics[height= 4 true cm]{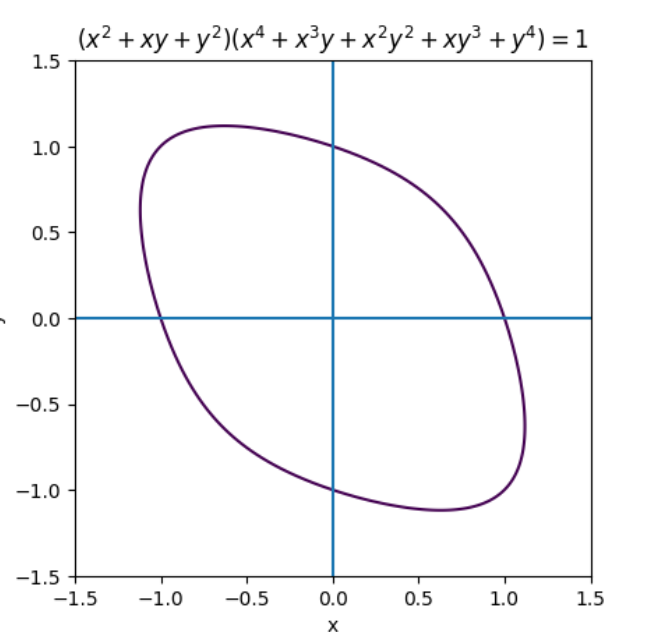}

\end{center}

 \vfill\eject
  \end{document}